\newtheorem{thm}{Theorem}[section]
\newtheorem{prop}[thm]{Proposition}
\newtheorem{cor}[thm]{Corollary}
\newtheorem{lemma}[thm]{Lemma}
\theoremstyle{note}
\def\a{\mathfrak a}
\def\R{\mathfrak R}
\def\F{\mathfrak F}
\def\H{\mathfrak H}
\def\rit#1{{\mbox{\rm #1}}}
\def\modx#1#2{\equiv#1\hspace{-1mm}\mod #2}
\def\nmodx#1#2{\not\equiv#1\hspace{-1mm}\mod #2}
\def\itemx#1{\item[{\rm(#1)}]}
\def\br#1{\{#1\}}
\begin{document}
\title{Singular values of generalized $\lambda$ functions\footnote{2000 {\it Mathematics Subject Classification}~11F03,11G15}}
\maketitle
\begin{center}
 N{\sc oburo} I{\sc shii}\end{center}
\section{Introduction}
 For a positive integer $N$, let $\Gamma_1(N)$ be the subgroup of $\rit{SL}_2(\mathbf Z)$ defined by
\[
\Gamma_1(N)=\left\{\left. \begin{pmatrix} a & b \\ c & d \end{pmatrix}\in \rit{SL}_2(\mathbf Z)~\right |~ a-1\equiv c \equiv 0 \mod N \right\}.
\]
We denote by $A_1(N)$ the modular function field with respect to $\Gamma_1(N)$. For a positive integer $N\geq 6$, let $\mathfrak a=[a_1,a_2,a_3]$ be a triple of integers with the properties $0<a_i\leq N/2$ and $a_i\ne a_j$ for any $i,j$. For an element $\tau$ of the complex upper half plane $\mathfrak H$, we denote by $L_\tau$ the lattice of $\mathbf C$ generated by $1$ and $\tau$ and by $\wp(z;L_\tau)$ the Weierstrass $\wp$-function relative to the lattice $L_\tau$. In \cite{II}, we defined a modular function $W_{\a}(\tau)$ with respect to $\Gamma_1(N)$ by
 \[
W_{\a}(\tau)=\frac{\wp (a_1/N;\tau)-\wp (a_3/N;\tau)}{\wp (a_2/N;\tau)-\wp (a_3/N;\tau)}.
\]
This function is one of generalized $\lambda$ functions introduced by S.Lang in Chapter 18, \S6 of \cite{LA}. He describes that it is interesting to investigate special values of generalized $\lambda$ functions at imaginary quadratic points, to see if they generate the ray class field. Here a point of $\H$ is called an imaginary quadratic point if it generates an imaginary quadratic field over $\mathbf Q$. In Theorem 3.7 of \cite{IK}, we showed, under a rather strong condition that $a_1a_2a_3(a_1-a_3)(a_2-a_3)$ is prime to $N$, that the values of $W_\a$ at imaginary quadratic points are units of ray class fields. Let $j$ be the modular invariant function. We showed in Theorem 5 of \cite{II} that each of the functions $W_{[3,2,1]},W_{[5,2,1]}$ generates $A_1(N)$ over $\mathbf C(j)$. In this article, we shall study the functions $W_\a$ in the particular case: $a_2=2,a_3=1$. To simplify the notation, henceforth we denote by $\Lambda_k$ the function $W_{[k,2,1]}$. We shall prove that if $2<k<N/2$, then $\Lambda_k$ generates $A_1(N)$ over $\mathbf C(j)$. This result implies that for an imaginary quadratic point $\alpha$ such that $\mathbf Z[\alpha]$ is the maximal order of the field $K=\mathbf Q(\alpha)$, the values $\Lambda_k(\alpha)$ and $\displaystyle e^{2\pi i/N}$ generate the ray class field of $K$ modulo $N$ over the Hilbert class field of $K$.  Let $\delta=(k,N)$ be the greatest common divisor of $k$ and $N$. On the assumption that $k$ satisfies either (i)~$\delta=1$ or (ii) $\delta>1,(\delta,3)=1$ and $N/\delta$ is not a power of a prime number, we shall prove that values of $\Lambda_k$ at imaginary quadratic points are algebraic integers. Throughout this article, we use the following notation:\newline 
For a function $f(\tau)$ and $A=\begin{pmatrix}a&b\\c&d\end{pmatrix}\in\rit{SL}_2(\mathbf Z)$, $f[A]_2,f\circ A$ represent
\[
f[A]_2=f\left(\frac{a\tau+b}{c\tau+d}\right)(c\tau+d)^{-2},~f\circ A=f\left(\frac{a\tau+b}{c\tau+d}\right).
\]
The greatest common divisor of $a,b\in\mathbf Z$ is denoted by $(a,b)$.
 For an integral domain $R$, $R((q))$ represents the ring of power series of a variable $q$ with coefficients in $R$ and $R[[q]]$ is a subring of $R((q))$ of power series with non-negative order. For elements $\alpha,\beta$ of $R$, the notation $\alpha\mid\beta$ represents that $\beta$ is divisible by $\alpha$, thus $\beta=\alpha\gamma$ for an element $\gamma\in R$.  

\section{Auxiliary results}
Let $N$ be a positive integer greater than $6$. Put $q=\rit{exp}(2\pi i\tau/N),\zeta=\exp(2\pi i/N)$. For an integer $x$, let
$\{x\}$ and $\mu (x)$ be the integers defined  by the following conditions:
\[
\begin{split}
&0\le \{x\}\le \frac N2,\quad \mu (x)=\pm 1,\\
&\begin{cases}\mu(x)=1\qquad &\text{if } x\modx {0,N/2}N,\\
             x\equiv \mu (x)\{x\} \mod N\qquad&\text{otherwise.}
\end{cases}
\end{split}
\]

For an integer $s$ not congruent to $0 \mod N$, let 
\[\phi_s(\tau)=\frac 1{(2\pi i)^2}\wp \left(\frac s N;L_\tau\right)-1/12.
\]
 Let $\displaystyle A=\begin{pmatrix}a&b\\c&d\end{pmatrix}\in\rit{SL}_2(\mathbf Z)$. Put $s^*=\mu (sc)sd,u_s=\zeta^{s^*}q^{\{sc\}}$. Then by Lemma 1 of \cite{II}, we have
{\small
\begin{equation}\label{eq1}
\phi_s[A]_2=
\begin{cases}\displaystyle
\frac{\zeta^{s^*}}{(1-\zeta^{s^*})^2}-\sum_{m=1}^{\infty}\sum_{n=1}^{\infty}n(1-\zeta^{s^*n})(1-\zeta^{-s^*n})q^{mnN}&\text{if }\{sc\}=0,\\
\displaystyle\sum_{n=1}^{\infty}n u_s^n-\displaystyle\sum_{m=1}^{\infty}\sum_{n=1}^{\infty}n(1-u_s^n)(1-u_s^{-n})q^{mnN}&\text{otherwise}.
\end{cases}
\end{equation}
}
We shall need next lemmas and propositions in the following sections.
\begin{lemma}\label{lem1}
Let $r,s,c,d$ be integers such that $0<r\ne s\leq N/2,~(c,d)=1$. Assume that $\{rc\}=\{sc\}$. Put $r^*=\mu(rc)rd, s^*=\mu(sc)sd$. Then we have $\zeta^{r^*-s^*}\ne 1$. Further if $\{rc\}=\{sc\}=0,N/2$, then $\zeta^{r^*+s^*}\ne 1$.
\end{lemma}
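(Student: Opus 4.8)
The plan is to translate both claims into statements about divisibility by $N$. Since $\zeta=\exp(2\pi i/N)$, we have $\zeta^{r^*-s^*}\neq 1$ exactly when $N\nmid(r^*-s^*)$, and likewise $\zeta^{r^*+s^*}\neq 1$ exactly when $N\nmid(r^*+s^*)$. Thus the lemma reduces to showing $N\nmid(r^*-s^*)$ in general, and $N\nmid(r^*+s^*)$ in the special case $\{rc\}=\{sc\}\in\{0,N/2\}$.

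First I would record the single congruence $x\equiv\mu(x)\{x\}\pmod N$, valid for every $x$ (it holds trivially in the excluded cases $x\equiv 0,N/2$, where $\mu(x)=1$ and $\{x\}\in\{0,N/2\}$). Multiplying by $\mu(x)$ gives $\mu(x)x\equiv\{x\}\pmod N$. Applying this to $x=rc$ and $x=sc$ and using the hypothesis $\{rc\}=\{sc\}$, I obtain $\mu(rc)rc\equiv\mu(sc)sc\pmod N$, that is, $mc\equiv 0\pmod N$ where $m:=\mu(rc)r-\mu(sc)s$. Note that by definition $r^*-s^*=md$.

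The key mechanism is then the coprimality $(c,d)=1$: choosing $u,v$ with $uc+vd=1$, if both $mc$ and $md$ were divisible by $N$, then so would be $m=u(mc)+v(md)$. Hence it suffices to prove $N\nmid m$, after which $N\mid mc$ forces $N\nmid md=r^*-s^*$. Now $m=\mu(rc)r-\mu(sc)s$ equals $\pm(r-s)$ or $\pm(r+s)$ according to whether $\mu(rc)=\mu(sc)$ or not, so $|m|\in\{|r-s|,\,r+s\}$. Because $0<r\neq s\le N/2$, one has $0<|r-s|<N/2$, while $0<r+s\le N$ with equality only if $r=s=N/2$, which the hypothesis $r\neq s$ rules out; thus $0<|m|<N$ and $N\nmid m$, giving the first assertion.

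For the second assertion, in the case $\{rc\}=\{sc\}=t\in\{0,N/2\}$ we have $\mu(rc)=\mu(sc)=1$, so $r^*+s^*=(r+s)d$ and $(r+s)c\equiv 2t\equiv 0\pmod N$. Setting $m':=r+s$, the same B\'ezout argument reduces the claim to $N\nmid m'$, and since $r\neq s$ forces $0<r+s<N$, this holds. The argument is essentially routine once it is set up; the only point needing care is the sign bookkeeping hidden in $m$, together with the verification that the degenerate boundary configuration $r=s=N/2$ — the single case that would make the bound $|m|<N$ fail — is precisely the one excluded by the hypothesis $r\neq s$.
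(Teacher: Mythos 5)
Your proof is correct and follows essentially the same route as the paper: from $\{rc\}=\{sc\}$ deduce $(\mu(rc)r-\mu(sc)s)c\equiv 0 \bmod N$, use $(c,d)=1$ to conclude that $N$ would divide $\mu(rc)r-\mu(sc)s$ (resp.\ $r+s$) if the relevant power of $\zeta$ were $1$, and rule this out by the size constraints $0<r\ne s\le N/2$. You merely make explicit what the paper leaves implicit (the B\'ezout step and the case analysis $|m|\in\{|r-s|,r+s\}$), so there is nothing to add.
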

\begin{proof}
The assumption $\{rc\}=\{sc\}$ implies that $(\mu(rc)r-\mu(sc)s)c\modx 0N$. If $\zeta^{r^*-s^*}=1$, then $(\mu(rc)r-\mu(sc)s)d\modx 0N$. From $(c,d)=1$, we obtain $\mu(rc)r-\mu(sc)s\modx 0N$. This shows $r=s$. Suppose $\{rc\}=\{sc\}=0,N/2$ and $\zeta^{r^*+s^*}=1$. Then we have $(r+s)c\modx 0N,~(r+s)d\modx 0N$. Therefore $r+s\modx 0N$. This is impossible, because $0<r\ne s\leq N/2$.
\end{proof}
\begin{lemma} Let $k\in\mathbf Z,\delta=(k,N)$. 
\begin{enumerate}
\itemx i For an integer $\ell$, if $\delta\mid \ell$, then $(1-\zeta^\ell)/(1-\zeta^k)\in\mathbf Z[\zeta]$.
\itemx {ii} If $N/\delta$ is not a power of a prime number, then $1-\zeta^k$ is a unit of $\mathbf Z[\zeta]$.
\end{enumerate}
\end{lemma}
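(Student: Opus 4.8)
The plan is to descend from the full cyclotomic ring $\mathbf Z[\zeta]$ to the subring generated by $\omega=\zeta^{\delta}$, which is a primitive $M$-th root of unity for $M=N/\delta$. Writing $k=\delta k'$ we have $(k',M)=(k,N)/\delta=1$, so $\zeta^{k}=\omega^{k'}$ is itself a primitive $M$-th root of unity; and in part~(i), writing $\ell=\delta\ell'$ (legitimate since $\delta\mid\ell$) gives $\zeta^{\ell}=\omega^{\ell'}\in\mathbf Z[\omega]\subseteq\mathbf Z[\zeta]$. Throughout I assume $M>1$, equivalently $N\nmid k$, as in all intended applications, so that $1-\zeta^{k}\neq0$.

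For part~(i): since $(k',M)=1$, the residue $k'$ is invertible modulo $M$, so I choose an integer $t\ge 0$ with $k't\equiv\ell'\ (\mathrm{mod}\ M)$, whence $\omega^{k't}=\omega^{\ell'}=\zeta^{\ell}$. The geometric-series identity then gives
\[
\frac{1-\zeta^{\ell}}{1-\zeta^{k}}=\frac{1-\omega^{k't}}{1-\omega^{k'}}=1+\omega^{k'}+\omega^{2k'}+\cdots+\omega^{(t-1)k'},
\]
a polynomial in $\omega$ with integer coefficients, hence an element of $\mathbf Z[\omega]\subseteq\mathbf Z[\zeta]$ (the degenerate case $\ell'\equiv0$ corresponds to $t=0$ and a zero quotient). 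This is the entire argument for~(i); nothing here is delicate.

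For part~(ii): here $1-\zeta^{k}=1-\omega^{k'}$ with $\omega^{k'}$ a primitive $M$-th root of unity and $M$ not a prime power. I will show it is a unit already in $\mathbf Z[\omega]$; since $\mathbf Z[\omega]\subseteq\mathbf Z[\zeta]$, it is then a unit in $\mathbf Z[\zeta]$. Because $\mathbf Z[\omega]$ is the full ring of integers of $\mathbf Q(\omega)$, it suffices to check that the absolute norm of $1-\omega^{k'}$ is $\pm1$, and that norm equals $\prod_{(j,M)=1}(1-\omega^{j})=\Phi_{M}(1)$, the value at $1$ of the $M$-th cyclotomic polynomial. So everything reduces to evaluating $\Phi_{M}(1)$. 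Evaluating $\frac{x^{M}-1}{x-1}=\prod_{d\mid M,\,d>1}\Phi_{d}(x)$ at $x=1$ gives $\prod_{d\mid M,\,d>1}\Phi_{d}(1)=M$; since $\Phi_{p^{a}}(1)=p$ for every prime power (read off from $\Phi_{p^{a}}(x)=1+x^{p^{a-1}}+\cdots+x^{(p-1)p^{a-1}}$), the prime-power divisors of $M$ already account for the full product $M$, so the product of the remaining factors is $1$; as each $\Phi_{d}(1)$ with $d>1$ is a positive integer, every such remaining factor, and in particular $\Phi_{M}(1)$ when $M$ is not a prime power, equals $1$. Hence $1-\zeta^{k}$ is a unit. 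I expect this evaluation of $\Phi_{M}(1)$ to be the only genuine content of the lemma; the passage to $\mathbf Z[\omega]$ and back, and the reductions above, are purely formal.
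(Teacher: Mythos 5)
Your proposal is correct, and part (ii) takes a genuinely different route from the paper's. For part (i) the two arguments coincide in substance: the paper picks $m$ with $\ell\equiv mk\ (\mathrm{mod}\ N)$ (possible exactly because $\delta=(k,N)$ divides $\ell$) and expands $(1-\zeta^{mk})/(1-\zeta^k)$ as a geometric sum; your passage through $\omega=\zeta^\delta$ and the exponent $t$ is the same computation in different clothing. For part (ii), however, the paper never touches norms or cyclotomic polynomials: it chooses two \emph{distinct} primes $p_1,p_2$ dividing $N/\delta$, applies (i) with $\ell=N/p_i$ to get $1-\zeta^\delta\mid 1-\zeta^{N/p_i}$, observes that $1-\zeta^{N/p_i}$ divides $p_i$ (since $\zeta^{N/p_i}$ is a primitive $p_i$-th root of unity and $p_i=\prod_{j=1}^{p_i-1}(1-\zeta^{jN/p_i})$), and concludes via B\'ezout, $xp_1+yp_2=1$, that $1-\zeta^\delta$ divides $1$; finally $1-\zeta^k$ divides $1-\zeta^\delta$ by (i) with $\ell=\delta$, so it too is a unit. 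You instead compute the absolute norm of $1-\omega^{k'}$ as $\Phi_M(1)$ and evaluate $\Phi_M(1)=1$ for $M$ not a prime power from $\prod_{d\mid M,\,d>1}\Phi_d(1)=M$ together with $\Phi_{p^a}(1)=p$. Both are correct; the trade-off is this: the paper's argument is entirely self-contained within divisibility in $\mathbf Z[\zeta]$, reusing part (i) and nothing else, whereas your reduction ``norm $\pm1$ implies unit'' invokes the theorem that $\mathbf Z[\omega]$ is the full ring of integers of $\mathbf Q(\omega)$ (you could avoid this, since $\prod_{(j,M)=1,\,j\ne k'}(1-\omega^j)$ lies visibly in $\mathbf Z[\omega]$ and is an explicit inverse up to sign). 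In return, your computation isolates the sharper and more illuminating fact $\Phi_M(1)=1$, which shows the non-prime-power hypothesis is exactly what makes the lemma tick, and it locates the unit already in the subring $\mathbf Z[\omega]$; you also explicitly flag the degenerate case $N\mid k$, which the paper leaves implicit.
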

\begin{proof} If $\delta | \ell$, then there exist an integer $m$ such that $\ell\modx{mk}{N}$. 
Therefore $\zeta^\ell=\zeta^{mk}$ and $(1-\zeta^k)\mid (1-\zeta^\ell)$. This shows (i). Let $p_i~(i=1,2)$ be distinct prime factors of $N/\delta$. 
Since $N/p_i=\delta (N/(\delta p_i))$, $1-\zeta^\delta\mid 1-\zeta^{N/p_i}$. 
Therefore $1-\zeta^\delta \mid p_i~(i=1,2)$. This implies that $1-\zeta^\delta$ is a unit. Because of $(k/\delta,N/\delta)=1$, $1-\zeta^k$ is also a unit .
\end{proof}
From \eqref{eq1} and Lemma~\ref{lem1}, we immediately obtain the following two propositions
.
\begin{prop}\label{prop1} Let $r,s\in\mathbf Z$ such that $0<r\ne s \leq N/2$.
\begin{enumerate}
\itemx i If $\{rc\},\{sc\}\ne 0$, then 
\[
(\phi_r-\phi_s)[A]_2\equiv \sum_{n=1}^\infty n(u_r^n-u_s^n)+u_r^{-1}q^N-u_s^{-1}q^N \mod q^N\mathbf Z[\zeta][[q]].
\]
\itemx{ii} If $\{rc\}=0$ and $\{sc\}\ne 0$, then
\[(\phi_r-\phi_s)[A]_2\equiv \frac{\zeta^{rd}}{(1-\zeta^{rd})^2}-\sum_{n=1}^\infty nu_s^n-u_s^{-1}q^N \mod q^N\mathbf Z[\zeta][[q]].\]
\itemx {iii} If $\{rc\}=\{sc\}=0$, then 
\[
(\phi_r-\phi_s)[A]_2\equiv \frac{-\zeta^{sd}(1-\zeta^{(r-s)d})(1-\zeta^{(r+s)d})}{(1-\zeta^{rd})^2(1-\zeta^{sd})^2}~\mod q^N\mathbf Z[\zeta][[q]],
\]
\end{enumerate}
\end{prop}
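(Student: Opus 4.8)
The plan is to substitute the explicit $q$-expansion \eqref{eq1} into $(\phi_r-\phi_s)[A]_2=\phi_r[A]_2-\phi_s[A]_2$, choosing for each of $\phi_r[A]_2$ and $\phi_s[A]_2$ the branch of \eqref{eq1} dictated by whether $\{rc\}$ and $\{sc\}$ vanish, and then to reduce modulo $q^N\mathbf Z[\zeta][[q]]$ by discarding every term whose $q$-order is at least $N$ and whose coefficient already lies in $\mathbf Z[\zeta]$. The three cases (i)--(iii) correspond exactly to the three ways of pairing the two branches of \eqref{eq1}, so the whole proposition follows once the reduced form of a single $\phi_s[A]_2$ is established in each branch.

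The heart of the argument is the bookkeeping of $q$-orders, which rests entirely on $0<\{sc\}\le N/2$. In the ``otherwise'' branch ($\{sc\}\ne 0$) the term $u_s^n=\zeta^{s^*n}q^{n\{sc\}}$ has order $n\{sc\}>0$ with coefficient in $\mathbf Z[\zeta]$, so $\sum_{n\ge 1}nu_s^n$ survives as written, its tail of order $\ge N$ being absorbed. For the double sum I expand the summand as $n(1-u_s^n)(1-u_s^{-n})q^{mnN}=n(2-u_s^n-u_s^{-n})q^{mnN}$ and examine the three pieces: the constant piece $2n\,q^{mnN}$ and the piece $-nu_s^nq^{mnN}$ both have order $\ge N$ and coefficients in $\mathbf Z[\zeta]$, hence drop out, while $-nu_s^{-n}q^{mnN}=-n\zeta^{-s^*n}q^{n(mN-\{sc\})}$ has order $n(mN-\{sc\})$, which is $\ge N$ for every $(m,n)\ne(1,1)$ precisely because $\{sc\}\le N/2$, and equals $N-\{sc\}<N$ only at $m=n=1$. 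Thus the double sum collapses to its single surviving term, yielding $\phi_s[A]_2\equiv\sum_{n\ge 1}nu_s^n+u_s^{-1}q^N$. In the branch $\{sc\}=0$ the entire double sum has order $\ge N$ with coefficients in $\mathbf Z[\zeta]$, so it vanishes modulo $q^N\mathbf Z[\zeta][[q]]$ and $\phi_s[A]_2\equiv\zeta^{s^*}/(1-\zeta^{s^*})^2$, where $s^*=sd$ because $\mu(sc)=1$ when $sc\equiv 0\bmod N$.

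Subtracting these reductions gives (i) and (ii) immediately. For (iii) both terms reduce to constants and I am left with $\zeta^{rd}/(1-\zeta^{rd})^2-\zeta^{sd}/(1-\zeta^{sd})^2$; writing $x=\zeta^{rd}$, $y=\zeta^{sd}$, the common numerator is $x(1-y)^2-y(1-x)^2=(x-y)(1-xy)$, and recognizing $x/y=\zeta^{(r-s)d}$ and $xy=\zeta^{(r+s)d}$ produces the stated closed form. Throughout I record that the relevant denominators are nonzero: from $rc\equiv 0$, $(c,d)=1$ and $0<r\le N/2$ one gets $\zeta^{rd}\ne 1$ (and likewise for $s$), while Lemma~\ref{lem1} guarantees that the leading coefficient of each surviving expansion does not vanish---namely $\zeta^{r^*-s^*}\ne 1$ when $\{rc\}=\{sc\}$ in (i), and $\zeta^{(r-s)d}\ne 1$ together with $\zeta^{(r+s)d}\ne 1$ in (iii)---so that each displayed congruence is genuinely nontrivial.

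I expect the only delicate point to be the order count for the piece $-nu_s^{-n}q^{mnN}$: one must check the boundary cases $m=1,n=2$, where the order is $2(N-\{sc\})\ge N$, and $\{sc\}=N/2$, where the surviving term sits exactly at order $N/2$, in order to be certain that nothing beyond $m=n=1$ slips below $q^N$. This is exactly where the hypothesis $\{sc\}\le N/2$ is used in full strength, and it is the step most prone to an off-by-one error.
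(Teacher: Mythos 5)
Your proof is correct and is exactly the argument the paper intends: the paper derives Proposition~\ref{prop1} ``immediately'' from \eqref{eq1} and Lemma~\ref{lem1}, and your substitution of the two branches of \eqref{eq1} is the direct verification, with the order bookkeeping (only the $(m,n)=(1,1)$ piece $u_s^{-1}q^N$ of the double sum survives below order $N$, thanks to $0<\{sc\}\le N/2$) and the numerator identity $x(1-y)^2-y(1-x)^2=(x-y)(1-xy)$ supplying precisely the omitted details.
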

\begin{prop}\label{prop2} Let $r,s\in\mathbf Z$ such that $0<r\ne s \leq N/2$. Put $\ell=\min(\{rc\},\{sc\})$. Then
\[(\phi_r-\phi_s)|[A]_2=\theta_{r,s}(A)q^\ell(1+qh(q)),\]
where $h(q)\in\mathbf Z[\zeta][[q]]$ and $\theta_{r,s}(A)$ is a non-zero element of $\mathbf Q(\zeta)$ given as follows.
In the case $\{rc\}=\{sc\}$,
\[
\theta_{r,s}(A)=\begin{cases}-\zeta^{s^*}(1-\zeta^{r^*-s^*})\quad&\text{if }\ell\ne 0,N/2,\\
           -\zeta^{s^*}(1-\zeta^{r^*-s^*})(1-\zeta^{r^*+s^*})\quad&\text{if }\ell=N/2,\\
\displaystyle\frac{-\zeta^{s^*}(1-\zeta^{r^*-s^*})(1-\zeta^{r^*+s^*})}{(1-\zeta^{r^*})^2(1-\zeta^{s^*})^2}\quad&\text{if }\ell=0.
\end{cases}
\]
In the case $\{rc\}\ne\{sc\}$,assuming that $\{rc\}<\{sc\}$,
\[
\theta_{r,s}(A)=\begin{cases}\displaystyle \zeta^{r^*}\quad&\text{if }\ell\ne 0,\\
\displaystyle\frac{\zeta^{r^*}}{(1-\zeta^{r^*})^2}\quad&\text{if }\ell=0.
\end{cases}
\]
\end{prop}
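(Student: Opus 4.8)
The plan is to feed the explicit $q$-expansions of \eqref{eq1} into the difference $\phi_r[A]_2-\phi_s[A]_2$ and read off the coefficient of the lowest power of $q$ that occurs. Since $u_r=\zeta^{r^*}q^{\{rc\}}$ and $u_s=\zeta^{s^*}q^{\{sc\}}$, that power is $q^{\ell}$ with $\ell=\min(\{rc\},\{sc\})$, and $\theta_{r,s}(A)$ is nothing but this leading coefficient; the whole task is to compute it and to verify that it is nonzero. The non-vanishing is exactly what Lemma~\ref{lem1} supplies: $1-\zeta^{r^*-s^*}\ne 0$ whenever $\{rc\}=\{sc\}$, together with $1-\zeta^{r^*+s^*}\ne 0$ in the borderline cases $\ell=0,N/2$. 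I would split the computation according to whether $\{rc\}=\{sc\}$, and inside each regime according to the value of $\ell$.

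The regime $\{rc\}\ne\{sc\}$, say $\{rc\}<\{sc\}$ so $\ell=\{rc\}$, is quick, since only $\phi_r[A]_2$ can contribute at order $q^{\ell}$. If $\ell\ne 0$, both functions lie in the second branch of \eqref{eq1}; the $n=1$ term of $\sum_n n u_r^n$ gives $\zeta^{r^*}q^{\ell}$, while every double-sum correction begins no earlier than $q^{N-\ell}$, and $N-\ell>\ell$ because $\ell<N/2$. Hence $\theta_{r,s}(A)=\zeta^{r^*}$. If $\ell=0$, then $\phi_r[A]_2$ is in the first branch, whose constant term $\zeta^{r^*}/(1-\zeta^{r^*})^2$ survives while $\phi_s[A]_2$ starts at positive order, so $\theta_{r,s}(A)=\zeta^{r^*}/(1-\zeta^{r^*})^2$. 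Both are visibly nonzero; this subcase is a restatement of Proposition~\ref{prop1}(ii).

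The regime $\{rc\}=\{sc\}=\ell$ carries the real content. For $0<\ell<N/2$ the only terms of order $q^{\ell}$ are the $n=1$ terms of the two single sums, so the coefficient is $\zeta^{r^*}-\zeta^{s^*}=-\zeta^{s^*}(1-\zeta^{r^*-s^*})$, nonzero by Lemma~\ref{lem1}. The subcase I expect to be the main obstacle is $\ell=N/2$: there $u_r^{-1}q^{N}=\zeta^{-r^*}q^{N/2}$ has the \emph{same} order as the main term $u_r=\zeta^{r^*}q^{N/2}$, so the $m=n=1$ double-sum term must be combined with the single sum. The resulting coefficient is $(\zeta^{r^*}+\zeta^{-r^*})-(\zeta^{s^*}+\zeta^{-s^*})$, which factors as a unit times $(1-\zeta^{r^*-s^*})(1-\zeta^{r^*+s^*})$ and is nonzero by both assertions of Lemma~\ref{lem1}. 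Finally $\ell=0$ places both functions in the first branch, and the two constant terms combine into $-\zeta^{s^*}(1-\zeta^{r^*-s^*})(1-\zeta^{r^*+s^*})$ divided by $(1-\zeta^{r^*})^2(1-\zeta^{s^*})^2$; this is exactly Proposition~\ref{prop1}(iii), again nonzero by Lemma~\ref{lem1}.

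It remains to account for the factored shape $q^{\ell}(1+qh(q))$ with $h\in\mathbf Z[\zeta][[q]]$, i.e. that dividing out $\theta_{r,s}(A)q^{\ell}$ leaves algebraic-integer coefficients. In the two unequal-$\{rc\}$ cases $\theta_{r,s}(A)^{-1}$ equals $\zeta^{-r^*}$ or $\zeta^{-r^*}(1-\zeta^{r^*})^2$, both in $\mathbf Z[\zeta]$, and since the remaining terms are integral of positive order, $h\in\mathbf Z[\zeta][[q]]$ follows at once. In the equal-$\{rc\}$ cases $\theta_{r,s}(A)$ carries genuine denominators, and integrality rests on the telescoping identities $(\zeta^{nr^*}-\zeta^{ns^*})/(\zeta^{r^*}-\zeta^{s^*})\in\mathbf Z[\zeta]$ and $(1-\zeta^{n(r^*+s^*)})/(1-\zeta^{r^*+s^*})\in\mathbf Z[\zeta]$: every higher coefficient of $\phi_r[A]_2-\phi_s[A]_2$ is assembled from such differences, so the factors of $\theta_{r,s}(A)$ divide them and cancel. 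Collecting the four subcases then yields the stated formulas for $\theta_{r,s}(A)$.
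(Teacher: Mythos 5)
Your overall route --- reading the leading coefficient of $(\phi_r-\phi_s)[A]_2$ directly off \eqref{eq1} and invoking Lemma~\ref{lem1} for non-vanishing --- is exactly the paper's argument: the paper gives no separate proof, asserting the proposition follows ``immediately'' from \eqref{eq1} and Lemma~\ref{lem1}. Your handling of the cases $\{rc\}\ne\{sc\}$ (both $\ell=0$ and $\ell\ne 0$) and of $\{rc\}=\{sc\}$ with $\ell=0$ or $0<\ell<N/2$ is correct, as is the accompanying integrality bookkeeping. (One small slip: in the unequal case the double-sum correction of $\phi_s[A]_2$ can begin at $q^{N-\{sc\}}$, which may be earlier than $q^{N-\ell}$; what matters, and what is true, is only that all corrections have order strictly greater than $\ell$.)

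The genuine problem sits in the subcase $\ell=N/2$, the one you yourself flagged as the obstacle. Your coefficient $(\zeta^{r^*}+\zeta^{-r^*})-(\zeta^{s^*}+\zeta^{-s^*})$ is right, but the phrase ``factors as a unit times $(1-\zeta^{r^*-s^*})(1-\zeta^{r^*+s^*})$'' does not prove the displayed formula, because the unit is not the displayed one: the coefficient equals $\zeta^{-r^*}(1-\zeta^{r^*-s^*})(1-\zeta^{r^*+s^*})=-\zeta^{s^*}(1-\zeta^{r^*-s^*})(1-\zeta^{-(r^*+s^*)})$, which differs from the stated $\theta_{r,s}(A)$ by the factor $-\zeta^{-(r^*+s^*)}$. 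This mismatch cannot be argued away: for $N=12$, $A=\begin{pmatrix}1&0\\3&1\end{pmatrix}$, $r=6$, $s=2$ the actual coefficient is $-3$, while the stated value is $-3\zeta^2$. So the statement's $\ell=N/2$ line is itself off by a root of unity (the exponent should be $-(r^*+s^*)$, equivalently the prefactor should be $\zeta^{-r^*}$); this is harmless downstream, since the paper only ever uses $\theta_{r,s}(A)$ up to units of $\mathbf Z[\zeta]$, but your proof must either end with the corrected constant or explicitly record the discrepancy rather than bury it in the word ``unit.'' A second gap in the same subcase concerns $h\in\mathbf Z[\zeta][[q]]$: your telescoping identities do not apply term by term, since $n(\zeta^{nr^*}-\zeta^{ns^*})$ alone is not divisible by $1-\zeta^{r^*+s^*}$. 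You must first group each single-sum term with the reflected double-sum term $n(\zeta^{-nr^*}-\zeta^{-ns^*})$ of the same order $q^{(2m-1)nN/2}$, producing $n(\zeta^{nr^*}-\zeta^{ns^*})\bigl(1-\zeta^{-n(r^*+s^*)}\bigr)$, and only then do both divisibility identities apply.
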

\section{Values of $\Lambda_k$ at imaginary quadratic points}
In this section, we shall prove that the values of $\Lambda_k=W_{[k,2,1]}$ at imaginary quadratic points are algebraic integers.
\begin{prop}\label{prop3} Let $k$ be an integer such that $3\leq k<N/2$. Put $\delta=(k,N)$. Assume either \rit{(i)} $\delta=1$ or \rit{(ii)} $\delta>1,(\delta,3)=1$ and $N/\delta$ is not a power of a prime number. Then for $A\in\rit{SL}_2(\mathbf Z)$,we have
\[\Lambda_k\circ A\in\mathbf Z[\zeta]((q)).\]
\end{prop}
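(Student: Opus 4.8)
The plan is to reduce everything to a single integrality statement about the leading Fourier coefficient of $\Lambda_k\circ A$. Writing $\Lambda_k=(\phi_k-\phi_1)/(\phi_2-\phi_1)$ (the constant $-1/12$ and the factor $(2\pi i)^{-2}$ cancel in the quotient), and noting that the automorphy factor $(c\tau+d)^{-2}$ is common to numerator and denominator, I would first record
\[
\Lambda_k\circ A=\frac{(\phi_k-\phi_1)[A]_2}{(\phi_2-\phi_1)[A]_2}.
\]
Applying Proposition~\ref{prop2} to each of the two differences then gives
\[
\Lambda_k\circ A=\frac{\theta_{k,1}(A)}{\theta_{2,1}(A)}\,q^{\ell_1-\ell_2}\,\frac{1+qh_1(q)}{1+qh_2(q)},
\]
where $\ell_1=\min(\{kc\},\{c\})$, $\ell_2=\min(\{2c\},\{c\})$ and $h_1,h_2\in\mathbf Z[\zeta][[q]]$.

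Since $(1+qh_2(q))^{-1}=\sum_{n\ge0}(-qh_2(q))^n\in\mathbf Z[\zeta][[q]]$, the power-series factor $(1+qh_1)/(1+qh_2)$ lies in $\mathbf Z[\zeta][[q]]$ and has constant term $1$, hence is a unit of $\mathbf Z[\zeta][[q]]$. Consequently $\Lambda_k\circ A\in\mathbf Z[\zeta]((q))$ as soon as the single algebraic number $\theta_{k,1}(A)/\theta_{2,1}(A)$ lies in $\mathbf Z[\zeta]$, and the whole problem is reduced to proving
\[
\frac{\theta_{k,1}(A)}{\theta_{2,1}(A)}\in\mathbf Z[\zeta]\qquad\text{for every }A\in\rit{SL}_2(\mathbf Z).
\]

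To establish this I would run through the finite list of cases prescribed by Proposition~\ref{prop2}, organised by the triple $(\{c\},\{2c\},\{kc\})$ and by the coincidences among these values. The numerator factors $\zeta^{\bullet}$ are units and cause no trouble; denominators occur only in the branches with $\ell=0$, that is, when one of $\{c\},\{2c\},\{kc\}$ vanishes, and they are then powers of $1-\zeta^{1^*},1-\zeta^{2^*},1-\zeta^{k^*}$. The cyclotomic lemma following Lemma~\ref{lem1} is the tool for disposing of them: its part (ii) shows that under hypothesis (ii) the factors $1-\zeta^{k^*}$ that arise are units, since a short computation with $(c,d)=1$ gives $(k^*,N)=\delta$ in exactly the relevant configurations; its part (i) shows that $(1-\zeta^{\delta'})\mid(1-\zeta^{\ell})$ whenever $(\delta',N)\mid\ell$, which lets surviving numerator factors absorb surviving denominator factors. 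Whenever $(d,N)=1$ — which holds automatically when $\{c\}=0$, as then $ad\equiv1\bmod N$ — I would first normalise every exponent by the Galois automorphism $\zeta^{d}\mapsto\zeta$, integrality being Galois-stable.

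The delicate point, and the one I expect to be the main obstacle, is the case $\{c\}=0$, where both $\theta_{k,1}$ and $\theta_{2,1}$ fall into the doubly-singular branch of Proposition~\ref{prop2}; in the sub-case $c\equiv0\bmod N$ the quotient reduces, after the Galois normalisation, to
\[
\frac{(1-\zeta^{k-1})(1-\zeta^{k+1})(1-\zeta^{2})^{2}}{(1-\zeta^{k})^{2}(1-\zeta)(1-\zeta^{3})}
\]
up to a unit. Here the factor $1-\zeta^{3}$ in the denominator, produced by the sum $2+1=3$ of the fixed indices of $\Lambda_k$, is the source of the hypothesis $(\delta,3)=1$: if $3\mid N$ this factor is not a unit and must be cancelled against the numerator. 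The condition $(\delta,3)=1$ forbids $3\mid k$ when $3\mid N$, so $k\equiv\pm1\bmod3$ and hence $3\mid(k\mp1)$; part (i) of the cyclotomic lemma then yields $(1-\zeta^{3})\mid(1-\zeta^{k-1})$ or $(1-\zeta^{3})\mid(1-\zeta^{k+1})$, performing the required cancellation. The remaining factor $(1-\zeta^{k})^{2}$ is removed directly by hypothesis (ii), whereas under hypothesis (i), where $\delta=1$, one instead carries out the bookkeeping of $(1-\zeta)$-adic valuations at the primes dividing $N$ (the only delicate situation being $N$ a prime power), verifying in each configuration that the numerator valuation dominates. Checking this domination uniformly, together with the analogous but easier sub-case $c\equiv N/2\bmod N$ and the cases $\{c\}\neq0$, is the bulk of the remaining work.
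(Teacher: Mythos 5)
Your proposal is correct and follows essentially the same route as the paper's proof: reduce via Proposition~\ref{prop2} to showing $\theta_{k,1}(A)/\theta_{2,1}(A)\in\mathbf Z[\zeta]$ (the series factor being a unit of $\mathbf Z[\zeta][[q]]$), then a case analysis on $(\{c\},\{2c\},\{kc\})$ driven by the cyclotomic divisibility lemma, with $(\delta,3)=1$ cancelling the $1-\zeta^{3d}$ factor, hypotheses (i)/(ii) disposing of $(1-\zeta^{kd})^{2}$, and the same explicit product appearing in the case $c\equiv 0\bmod N$. The only slip is notational: $\{c\}=0$ means $c\equiv 0\bmod N$ only (if $c\equiv N/2\bmod N$ then $\{c\}=N/2$), but since you treat $c\equiv N/2\bmod N$ as a separate sub-case this does not affect the argument.
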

\begin{proof} Put $A=\begin{pmatrix}a&b\\c&d\end{pmatrix}$. Proposition \ref{prop2} shows 
 \[\Lambda_k\circ A=\omega f(q),\]
 where $\omega=\theta_{k,1}(A)/\theta_{2,1}(A)$ and $f$ is a power series in $\mathbf Z[\zeta]((q))$. Therefore it is sufficient to prove that $\omega\in \mathbf Z[\zeta]$. First we consider the case $\{c\}\ne 0$. Let $\{2c\}\ne\{c\}$
. By (ii) of Proposition\ref{prop2}, we see $1/(\phi_2-\phi_1)[A]_2 \in \mathbf Z[\zeta]((q))$. 
Further if $\{kc\}\ne0$, then $(\phi_k-\phi_1)[A]_2\in\mathbf Z[\zeta][[q]]$. If $\{kc\}=0$,then $\delta>1$ and $c\modx0{N/\delta}$. Therefore $\zeta^{kd}$ is a primitive $N/\delta$-th root of unity. The assumption (ii) shows $1-\zeta^{kd}$ is a unit. Thus $(\phi_k-\phi_1)[A]_2\in\mathbf Z[\zeta][[q]]$. Hence we have $\omega\in\mathbf Z[\zeta]$. Let $\{2c\}=\{c\}$. 
Then, since $\{c\}\ne 0$, we have $N\modx 03,~(k,3)=1$ and $\{c\}=\{2c\}=\{kc\}=N/3$, $\mu(2c)=-\mu(c)$, $\mu(kc)=(\frac k3)\mu(c)$, where $(\frac *3)$ is the Legendre symbol. 
By the same proposition, we know that $\displaystyle\omega=(1-\zeta^{(\mu(kc)k-\mu(c))d})/(1-\zeta^{-3\mu(c)d})$. Since $\mu(kc)k-\mu(c)\modx 03$, we have $\omega\in\mathbf Z[\zeta]$. 
Next consider the case $\{c\}=0$. Then we have $\{c\}=\{2c\}=\{kc\}=0,\mu(c)=\mu(2c)=\mu(kc)=1$, $(d,N)=1$ and 
\[
\omega=\left(\frac{1-\zeta^{2d}}{1-\zeta^{kd}}\right)^2\cdot\frac{(1-\zeta^{(k-1)d})(1-\zeta^{(k+1)d})}{(1-\zeta^d)(1-\zeta^{3d})}.
\]
If $\delta =1$, then $(kd,N)=1$. If $\delta\ne 1$, then the assumption (ii) implies $(1-\zeta^{kd})$ is a unit. Therefore $(1-\zeta^{2d})/(1-\zeta^{kd})\in\mathbf Z[\zeta]$.  If $N\not\equiv 0\mod 3$, then since $(3d,N)=1$, we know \[\frac{(1-\zeta^{(k-1)d})(1-\zeta^{(k+1)d})}{(1-\zeta^d)(1-\zeta^{3d})}\in\mathbf Z[\zeta].\]
 If $N\modx 03$, then $(k,3)=1$ and one of $k+1,k-1$ is divisible by $3$. Lemma \ref{lem1} (i) gives  
\[\displaystyle\frac{(1-\zeta^{(k-1)d})(1-\zeta^{(k+1)d})}{(1-\zeta^d)(1-\zeta^{3d})}\in\mathbf Z[\zeta].\]
 Hence we obtain $\omega\in\mathbf Z[\zeta]$. 
\end{proof}
\begin{thm} Let $\alpha$ be an imaginary quadratic point. Then $\Lambda_k(\alpha)$ is an algebraic integer.
\end{thm}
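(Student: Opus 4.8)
The plan is to exhibit $\Lambda_k$ as an element integral over $\mathbf{Z}[j]$ and then to specialize at $\alpha$, using the classical fact from the theory of complex multiplication that $j(\alpha)$ is an algebraic integer for every imaginary quadratic point $\alpha$. First I would check that $\Lambda_k(\alpha)$ is a well-defined finite number: the denominator $\wp(2/N;L_\tau)-\wp(1/N;L_\tau)$ of $\Lambda_k$ never vanishes on $\mathfrak{H}$, since $\wp(2/N;L_\tau)=\wp(1/N;L_\tau)$ would force $1/N$ or $3/N$ to lie in $L_\tau\cap\mathbf{R}=\mathbf{Z}$, which is impossible for $N>6$. Hence $\Lambda_k$ is holomorphic on $\mathfrak{H}$, and so is each $\Lambda_k\circ A$. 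Granting the integrality of $\Lambda_k$ over $\mathbf{Z}[j]$, the value $\Lambda_k(\alpha)$ will be a root of a monic polynomial whose coefficients are algebraic integers (polynomials in the algebraic integer $j(\alpha)$ with coefficients in $\mathbf{Z}[\zeta]$), whence $\Lambda_k(\alpha)$ is an algebraic integer.

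To prove integrality over $\mathbf{Z}[j]$, I would form the monic polynomial $P(X)=\prod_{\gamma}\bigl(X-\Lambda_k\circ\gamma\bigr)$, where $\gamma$ runs over representatives of the finitely many right cosets $\Gamma_1(N)\backslash\rit{SL}_2(\mathbf{Z})$. Since $\Lambda_k$ is $\Gamma_1(N)$-invariant, each factor $\Lambda_k\circ\gamma$ depends only on the coset of $\gamma$, so $P$ is well defined; and right translation by any $A\in\rit{SL}_2(\mathbf{Z})$ merely permutes the cosets, so the coefficients of $P$ are invariant under $\rit{SL}_2(\mathbf{Z})$. As $\Lambda_k$ itself appears among the factors, it is a root of $P$, and everything reduces to showing $P\in\mathbf{Z}[\zeta][j][X]$.

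Here Proposition \ref{prop3} is the crucial input: it gives $\Lambda_k\circ\gamma\in\mathbf{Z}[\zeta]((q))$ for every $\gamma$, so each coefficient of $P$, being an integer-coefficient polynomial in the $\Lambda_k\circ\gamma$, again lies in $\mathbf{Z}[\zeta]((q))$. Being $\rit{SL}_2(\mathbf{Z})$-invariant, each coefficient is in particular invariant under $\tau\mapsto\tau+1$, so its expansion is a Laurent series in $q^N=\exp(2\pi i\tau)$ with coefficients in $\mathbf{Z}[\zeta]$; moreover it is holomorphic on $\mathfrak{H}$ with at most a pole at the cusp, hence is a polynomial in $j$. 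The main step is then to descend this $q$-integrality to integrality of the $j$-polynomial: because $j=q^{-N}+744+\cdots$ has rational integer coefficients and leading term $q^{-N}$ with coefficient $1$, a coefficient $g=\sum_{n\geq -M}c_n q^{nN}$ with $c_n\in\mathbf{Z}[\zeta]$ can be reduced by subtracting $c_{-M}\,j^{M}$, which lowers the order of the pole while keeping all coefficients in $\mathbf{Z}[\zeta]$; iterating yields $g\in\mathbf{Z}[\zeta][j]$.

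Thus $P\in\mathbf{Z}[\zeta][j][X]$ is monic with $\Lambda_k$ as a root, so $\Lambda_k$ is integral over $\mathbf{Z}[\zeta][j]$; since $\zeta$ is integral over $\mathbf{Z}$, the ring $\mathbf{Z}[\zeta][j]$ is integral over $\mathbf{Z}[j]$, and by transitivity $\Lambda_k$ is integral over $\mathbf{Z}[j]$. Specializing $P$ at $\tau=\alpha$ and invoking $j(\alpha)\in\overline{\mathbf{Z}}$ finishes the proof as indicated in the first paragraph. The one genuinely delicate point is the descent from $q$-integrality to $\mathbf{Z}[\zeta][j]$-integrality of the symmetric functions; once Proposition \ref{prop3} is available, the rest of the argument is formal.
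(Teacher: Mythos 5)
Your proposal is correct and follows essentially the same route as the paper: both form the modular equation $\Phi(X,j)=\prod_{A}(X-\Lambda_k\circ A)$ over coset representatives of $\Gamma_1(N)$ (the paper uses $\Gamma_1(N)\{\pm E_2\}$, an immaterial difference), invoke Proposition \ref{prop3} for the $q$-integrality of each conjugate, conclude the coefficients lie in $\mathbf{Z}[\zeta][j]$, and specialize at $\alpha$ using the integrality of $j(\alpha)$. Your write-up merely makes explicit two points the paper leaves implicit, namely the holomorphy of $\Lambda_k\circ A$ on $\mathfrak{H}$ and the pole-killing descent from $\mathbf{Z}[\zeta]$-integral $q$-expansions to membership in $\mathbf{Z}[\zeta][j]$.
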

\begin{proof}
Let $\R$ be a transversal of the coset decomposition of $\rit{SL}_2(\mathbf Z)$ by $\Gamma_1(N)\{\pm E_2\}$, where $E_2$ is the unit matrix. Consider a modular equation $\Phi(X,j)=\prod_{A\in \R}(X-\Lambda_k\circ A)$. Since $\Lambda_k\circ A$ has no poles in $\H$ and $\Lambda_k\circ A\in\mathbf Z[\zeta]((q))$ by Proposition \ref{prop3}, the coefficients of $\Phi(X,j)$ are polynomials of $j$ with coefficients in $\mathbf Z[\zeta]$. Since $j(\alpha)$ is an algebraic integer (see Theorem 10.23 in \cite{C1}), $\Phi(X,j(\alpha))$ is a monic polynomial with algebraic integer coefficients. Because $\Lambda_k(\alpha)$ is a root of $\Phi(X,j(\alpha))$, it is an algebraic integer. 
\end{proof}
Further we can show that $\Phi(X,j)\in \mathbf Z[j][X]$ and that $\Lambda_k(\alpha)$ belongs to the ray class field of $\mathbf Q(\alpha)$ modulo $N$. For details, see \S 3 of \cite{IK}.
\begin{cor} Let $A\in\rit{SL}_2(\mathbf Z)$. Then the values of the function $\Lambda_k\circ A$ at imaginary quadratic points are algebraic integers. In particular, the function
\[
\frac{\wp (k\tau/N;\tau)-\wp (\tau/N;\tau)}{\wp (2\tau/N;\tau)-\wp (\tau/N;\tau)}
\]
takes algebraic and integral values at imaginary quadratic points, for $2<k<N/2$. 
\end{cor}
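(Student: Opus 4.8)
The plan is to deduce both assertions from the Theorem just established, using only the elementary facts that $\rit{SL}_2(\mathbf Z)$ acts on $\H$ preserving imaginary quadratic points, and that the displayed quotient is itself a composite $\Lambda_k\circ A$ for a suitable $A$. No new analytic input is needed: the whole substance sits in the Theorem and, through it, in Proposition \ref{prop3}.

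First I would prove the general statement. Fix $A=\begin{pmatrix}a&b\\c&d\end{pmatrix}\in\rit{SL}_2(\mathbf Z)$ and an imaginary quadratic point $\alpha$. By definition $(\Lambda_k\circ A)(\alpha)=\Lambda_k(A\alpha)$, so it suffices to check that $A\alpha=\frac{a\alpha+b}{c\alpha+d}$ is again an imaginary quadratic point and then to invoke the Theorem. Since $a,b,c,d\in\mathbf Z$ and $c\alpha+d\ne 0$, the point $A\alpha$ lies in the field $K=\mathbf Q(\alpha)$; and because $A\in\rit{SL}_2(\mathbf R)$ maps $\H$ into $\H$, we have $A\alpha\in\H$, whence $A\alpha\notin\mathbf R\supseteq\mathbf Q$. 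Thus $\mathbf Q\subsetneq\mathbf Q(A\alpha)\subseteq K$, and since $[K:\mathbf Q]=2$ we get $\mathbf Q(A\alpha)=K$, so $A\alpha$ is an imaginary quadratic point. The Theorem then shows that $\Lambda_k(A\alpha)$ is an algebraic integer, which is the first assertion.

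For the ``in particular'' clause I would exhibit the displayed function as $\Lambda_k\circ S$ with $S=\begin{pmatrix}0&-1\\1&0\end{pmatrix}$. Writing $\wp(s/N;\tau)=\wp(s/N;L_\tau)$ and using $L_{-1/\tau}=\tau^{-1}L_\tau$ together with the homogeneity $\wp(\lambda z;\lambda L)=\lambda^{-2}\wp(z;L)$ taken with $\lambda=\tau^{-1}$, one finds $\wp(s/N;L_{-1/\tau})=\tau^2\wp(s\tau/N;L_\tau)$ for each $s$. Substituting this for $s=k,2,1$ into $\Lambda_k(-1/\tau)=W_{[k,2,1]}(-1/\tau)$, the common factor $\tau^{2}$ cancels between numerator and denominator, and the quotient becomes exactly the displayed function. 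Since $-1/\tau=S\tau$, the displayed function equals $\Lambda_k\circ S$, and the first assertion applied to $A=S$ yields that it takes algebraic and integral values at imaginary quadratic points (for $2<k<N/2$, which keeps us under the hypotheses of the Theorem).

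I expect no serious obstacle. The only point requiring care is the $\wp$-homogeneity bookkeeping in the second step: keeping track of the scalar $\tau^{-1}$ in $L_{-1/\tau}=\tau^{-1}L_\tau$ and confirming that the three resulting factors $\tau^{2}$ cancel, so that no automorphy weight survives in the ratio. Once that identity is verified the Corollary is immediate, the real content having already been discharged in the Theorem.
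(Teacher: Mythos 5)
Your proposal is correct and takes essentially the same route as the paper: the general assertion follows because $A\alpha$ is again an imaginary quadratic point, so the Theorem applies to $\Lambda_k(A\alpha)$, and the particular assertion follows by identifying the displayed function with $\Lambda_k\circ S$ for $S=\begin{pmatrix}0&-1\\1&0\end{pmatrix}$ via the transformation behaviour of $\wp$. Your explicit homogeneity computation $\wp(s/N;L_{-1/\tau})=\tau^2\wp(s\tau/N;L_\tau)$ merely fills in the detail the paper delegates to the transformation formula in \S2 of its reference [II].
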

\begin{proof}
Let $\alpha$ be an imaginary quadratic point. Then, $A(\alpha)$ is an imaginary quadratic point. Therefore, we have the former part of the assertion. If we put $\displaystyle A=\begin{pmatrix}0&-1\\1&0 \end{pmatrix}$, then from the transformation formula of $\wp((r\tau+s)/N;L_\tau)$ in \S2 of \cite{II}, we obtain the latter part.
\end{proof} 
\section{Generators of $A_1(N)$}
Let $A(N)$ be the modular function field of the principal congruence subgroup $\Gamma (N)$ of level $N$. For a subfield $\F$ of $A(N)$, let us denote by $\F_{\mathbf Q(\zeta)}$ the subfield of $\F$ consisted of all modular functions having Fourier coefficients in $\mathbf Q(\zeta)$.

\begin{thm}\label{th1} Let $k$ be an integer such that $2<k<N/2$. Then we have $A_1(N)_{\mathbf Q(\zeta)}=\mathbf Q(\zeta)(\Lambda_k,j)$ 
\end{thm}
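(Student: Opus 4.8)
My plan is to recast the equality of fields as an equality of Galois groups and then to extract the relevant stabilizer from the leading Fourier coefficients furnished by Proposition~\ref{prop2}. Put $\mathcal F_N:=A(N)_{\mathbf Q(\zeta)}$. It is standard that $\mathcal F_N$ is Galois over $\mathbf Q(\zeta)(j)$ with group $\bar G:=\rit{SL}_2(\mathbf Z/N\mathbf Z)/\{\pm E_2\}$, the class of $A$ acting by $f\mapsto f\circ A$, and that the fixed field of the image $\bar\Gamma_1(N)$ of $\Gamma_1(N)$ (the classes of $\begin{pmatrix}1&b\\0&1\end{pmatrix}$) is exactly $A_1(N)_{\mathbf Q(\zeta)}$. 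First I would record that $\Lambda_k\in A_1(N)_{\mathbf Q(\zeta)}$: it is $\Gamma_1(N)$-invariant and holomorphic on $\H$ by its construction in \cite{II}, and setting $A=E_2$ in \eqref{eq1} exhibits its Fourier expansion as a power series in $q^N$ with coefficients in $\mathbf Q(\zeta)$ and nonzero constant term. By the Galois correspondence $\mathbf Q(\zeta)(\Lambda_k,j)=\mathcal F_N^{\,H}$, where $H=\{\bar A\in\bar G:\Lambda_k\circ A=\Lambda_k\}$; since $\bar\Gamma_1(N)\subseteq H$ automatically, the theorem is equivalent to the single assertion
\[
\Lambda_k\circ A=\Lambda_k,\ A=\begin{pmatrix}a&b\\c&d\end{pmatrix}\in\rit{SL}_2(\mathbf Z)\ \Longrightarrow\ c\equiv0,\ a\equiv d\equiv\pm1\ \bmod N.
\]

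The computation rests on $\Lambda_k\circ A=(\phi_k-\phi_1)[A]_2/(\phi_2-\phi_1)[A]_2$, whose numerator and denominator have, by Proposition~\ref{prop2}, $q$-orders $\min(\{kc\},\{c\})$ and $\min(\{2c\},\{c\})$ and explicit leading coefficients $\theta_{k,1}(A),\theta_{2,1}(A)$. Since $\Lambda_k$ is a unit of $\mathbf Q(\zeta)[[q^N]]$, the function $\Lambda_k\circ A$ must have $q$-order $0$, forcing $\min(\{kc\},\{c\})=\min(\{2c\},\{c\})$, and its constant term must equal $\Lambda_k(\infty)=(\psi(k)-\psi(1))/(\psi(2)-\psi(1))$, where $\psi(m)$ denotes the constant term $\zeta^m/(1-\zeta^m)^2$ of $\phi_m$. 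The crucial elementary input is that $\psi(m)=-1/(4\sin^2(\pi m/N))$ is real and strictly increasing for $1\le m\le N/2$; hence, because $1<2<k$, the value $\Lambda_k(\infty)$ is real and strictly greater than $1$.

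I would exclude $c\not\equiv0$ by a size estimate on this constant term. Away from degeneracies, the order-$0$ condition forces either $\{c\}<\{2c\},\{kc\}$ or $\{kc\}=\{2c\}<\{c\}$; feeding the matching formulas for $\theta_{k,1}(A),\theta_{2,1}(A)$ into the quotient, the constant term of $\Lambda_k\circ A$ comes out equal to $1$ in the first case and to the root of unity $\zeta^{k^*-2^*}$ in the second, so it has absolute value $1$, whereas $\Lambda_k(\infty)>1$, a contradiction (here $k\ne2$ is precisely what makes $\Lambda_k(\infty)>1$ strict). The delicate part is the degenerate configurations in which two or three of $\{c\},\{2c\},\{kc\}$ coincide, each forcing a divisibility relation between one of $k,k\pm1,k\pm2$ and $N/(N,c)$; the most subtle is the triple coincidence $\{c\}=\{2c\}=\{kc\}=N/3$ occurring for $3\mid N$, $(k,3)=1$, where the leading coefficients acquire factors $1-\zeta^{\bullet}$ of absolute value $\ne1$. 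This is exactly the configuration (Legendre symbol $(\tfrac{k}{3})$, $\mu(2c)=-\mu(c)$, and so on) already met in the proof of Proposition~\ref{prop3}, and it must be disposed of by an explicit check that the resulting constant term still differs from $\Lambda_k(\infty)$. Pushing this case analysis through is the main obstacle.

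Finally, with $c\equiv0$ in force, \eqref{eq1} collapses to the clean identity $\phi_s\circ A=\phi_{sd}$ (indices read modulo $N$, using $\phi_{-x}=\phi_x$ and that $d$ is a unit mod $N$), so $\Lambda_k\circ A=(\phi_{kd}-\phi_d)/(\phi_{2d}-\phi_d)$ with $\{d\},\{2d\},\{kd\}$ pairwise distinct. To force $d\equiv\pm1$ I would match the first two Fourier coefficients with those of $\Lambda_k$: the constant term gives one relation among the real numbers $\psi(\{d\}),\psi(\{2d\}),\psi(\{kd\})$, and the $q^N$-coefficient, governed by the relation $-(1-\zeta^m)(1-\zeta^{-m})=\psi(m)^{-1}$, gives a second; by the injectivity of $m\mapsto\psi(m)$ on $\{1,\dots,N/2\}$ these two relations pin down $\{d\}=1$, whence $d\equiv\pm1$ and $a\equiv\pm1$. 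Equivalently, this is the faithfulness of the diamond action of $(\mathbf Z/N\mathbf Z)^{\times}/\{\pm1\}$ on $\Lambda_k$; combined with the previous paragraph it gives $H=\bar\Gamma_1(N)$ and hence the theorem.
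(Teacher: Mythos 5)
Your overall frame coincides with the paper's: the same Galois-theoretic reduction (via Theorem 3 of Chapter 6 of \cite{LA}) to the single implication $\Lambda_k\circ A=\Lambda_k\Rightarrow A\in\Gamma_1(N)\{\pm E_2\}$, and the same use of Proposition~\ref{prop2} to equate $q$-orders and compare leading coefficients. Your observation that $\psi(m)=\zeta^m/(1-\zeta^m)^2=-1/(4\sin^2(\pi m/N))$ is real and strictly increasing on $1\le m\le N/2$, so that the constant term of $\Lambda_k$ is a real number $\lambda=(\psi(k)-\psi(1))/(\psi(2)-\psi(1))>1$, is a genuinely nice simplification of the \emph{non-degenerate} cases with $c\nmodx 0N$ (the paper instead derives cyclotomic identities like $(1-\zeta^{k+2})(1-\zeta^{k-2})=0$). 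But the proposal has two genuine gaps. First, you explicitly leave the degenerate coincidences unresolved (``the main obstacle''): $\{c\}=\{2c\}=\{kc\}=N/3$; $\{kc\}=\{c\}<\{2c\}$; and the cases where the common order is $0$ or $N/2$. These are not routine refinements of your modulus argument. For instance, when $\{2c\}=\{kc\}=0<\{c\}$ the ratio of leading coefficients is $\psi(\{kd\})/\psi(\{2d\})$, which is real and positive, so no absolute-value comparison with $\lambda$ can apply; the paper kills this and the other coincidences only by matching a \emph{second} Fourier coefficient (the coefficients of $u_1^2$, of $q^{N/2}$, etc.), reaching contradictions such as $\zeta^{3\mu(c)d}\omega_2\omega_k=-1/2$, a root of unity equated to a number of modulus $1/2$.

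Second, and more seriously, your treatment of $c\modx 0N$ --- the faithfulness of the diamond action, which is the heart of the theorem --- is an assertion, not a proof. Writing $x=\psi(t),y=\psi(r),z=\psi(s)$ with $t=\{d\},r=\{2d\},s=\{kd\}$ and $x_0=\psi(1),y_0=\psi(2),z_0=\psi(k)$, your two matched coefficients give exactly the two equations $z-x=\lambda(y-x)$ and $(y-x)/(xyz)=(y_0-x_0)/(x_0y_0z_0)$: two equations in three unknowns, whose real solution set is a curve through the trivial point $(x_0,y_0,z_0)$. Injectivity of $\psi$ cannot ``pin down'' $t=1$ from an underdetermined system; you would need to prove that no other triple of the special form $(\psi(\{d\}),\psi(\{2d\}),\psi(\{kd\}))$, $d$ a unit, lies on that curve, and no argument is given --- indeed this is precisely the statement to be proved. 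The paper's key idea, which your proposal is missing, is to escape this underdetermination by changing cusps: from $(\phi_s-\phi_t)/(\phi_r-\phi_t)=(\phi_k-\phi_1)/(\phi_2-\phi_1)$ it composes with $T=\begin{pmatrix}1&0\\1&1\end{pmatrix}$, where $\phi_\ell[T]_2$ has $q$-order exactly $\ell$ by \eqref{eq6}; cross-multiplying and comparing orders forces $r=t+1<s$, and comparing coefficients modulo $u^{t+4}$ of the resulting identity forces $t=1$. Without this step (or some substitute exploiting more than two coefficients), your final paragraph assumes the conclusion.
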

\begin{proof} 
 By Theorem 3 of Chapter  6 of \cite{LA}, the field $A(N)_{\mathbf Q(\zeta)}$ is a Galois extension over $\mathbf Q(\zeta)(j)$ with the Galois group $\rit{SL}_2(\mathbf Z)/\Gamma(N)\{\pm E_2\}$ and the field $A_1(N)_{\mathbf Q(\zeta)}$ is the fixed field of the subgroup $\Gamma_1(N)\{\pm E_2\}$. Since $\Lambda_k\in A_1(N)_{\mathbf Q(\zeta)}$, to prove the assertion, we have only to show $A\in\Gamma_1(N)\{\pm E_2\}$, for $A\in\rit{SL}_2(\mathbf Z)$ such that $\Lambda_k\circ A=\Lambda_k$. Let $A=\begin{pmatrix}a&b\\ c&d\end{pmatrix}\in\rit{SL}_2(\mathbf Z)$ such that $\Lambda_k\circ A=\Lambda_k$.  Since the order of $q$-expansion of $\Lambda_k$ is $0$ and that of $\Lambda_k\circ A$ is $\min(\br{kc},\br{c})-\min(\br{2c},\br{c})$ by Proposition \ref{prop2}, we have 
\begin{equation}\label{eq2}
\min(\br{kc},\br{c})=\min(\br{2c},\br{c}). 
\end{equation}
By considering  power series modulo $q^N$, thus modulo $q^N\mathbf Q(\zeta)[[q]]$, from Proposition \ref{prop3} we obtain 
\begin{equation}\label{eq3}
\theta_{2,1}(E_2)(\phi_k-\phi_1)[A]_2\equiv \theta_{k,1}(E_2)(\phi_2-\phi_1)[A]_2\quad \mod q^N
\end{equation}
For an integer $i$, put $u_i=\zeta^{\mu(ic)id}q^{\br{ic}},\omega_i=\zeta^{(\mu(ic)i-\mu(c))d}$. 
First of all, we shall prove that $c\modx 0N$. Let us assume $c\nmodx 0N$.
Suppose that $\br{2c}=\br c$. Since $\br c\ne 0$, we see $\br c=N/3$. Further since by \eqref{eq2} $\br{kc}\geq \br c$, we have $(k,3)=1, \br c=\br{2c}=\br{kc}=N/3$ and $u_k=\omega_ku_1$, $u_2=\omega_2u_1$. Lemma \ref{lem1} gives that $\omega_k,\omega_2\ne 1,\omega_k\ne\omega_2$.
By \eqref{eq3} and Proposition \ref{prop1},
\[
\begin{split}
\theta_{2,1}(E_2)&\left(\sum_nn(u_k^n-u_1^n)+u_k^{-1}q^N-u_1^{-1}q^N\right)\equiv\\
&\theta_{k,1}(E_2)\left(\sum_nn(u_2^n-u_1^n)+u_2^{-1}q^N-u_1^{-1}q^N\right)\quad \mod q^N.
\end{split}
\]
Therefore
\[
\begin{split}
\theta_{2,1}(E_2)&\left(\sum_nn(\omega_k^n-1)u_1^n+(\omega_k^{-1}-1)u_1^{-1}q^N\right)\equiv\\
&\theta_{k,1}(E_2)\left(\sum_nn(\omega_2^n-1)u_1^n+(\omega_2^{-1}-1)u_1^{-1}q^N\right)\quad \mod q^N.
\end{split}
\]
Since $q^N=\zeta^{-3\mu(c)d}u_1^3$,
\[
\begin{split}
\theta_{2,1}(E_2)&((\omega_k-1)u_1+(2(\omega_k^2-1)+\zeta^{-3\mu(c)d}(\omega_k^{-1}-1)u_1^2)\equiv\\
&\theta_{k,1}(E_2)((\omega_2-1)u_1+(2(\omega_2^2-1)+\zeta^{-3\mu(c)d}(\omega_2^{-1}-1)u_1^2)\quad \mod u_1^3.
\end{split}
\]
By comparing the coefficients of $u_1,u_1^2$ on both sides, we have
\[
2(\omega_k+1)-\omega_k^{-1}\zeta^{-3\mu(c)d}=2(\omega_2+1)-\omega_2^{-1}\zeta^{-3\mu(c)d}.
\]
This equation implies that $\zeta^{3\mu(c)d}\omega_2\omega_k=-1/2$. We have a contradiction. Suppose $\br{2c}>\br c$. Then by \eqref{eq2}, we know $\br{kc}\geq\br c$. If $\br{kc}>\br c$, then the $q$-expansion of $\Lambda\circ A$ begins with $1$. Thus $\theta_{k,1}(E_2)=\theta_{2,1}(E_2)$. This gives that $(1-\zeta^{k+2})(1-\zeta^{k-2})=0$. We have a contradiction. If $\br{kc}=\br c$, then $\br{kc},\br c\ne 0,N/2$ and $u_k=\omega_ku_1$. By considering mod $q^N$ as above, we obtain
\[
\begin{split}
\theta_{2,1}(E_2)&\left(\sum_nn(\omega_k^n-1)u_1^n+(\omega_k^{-1}-1)u_1^{-1}q^N\right)\equiv\\
&\theta_{k,1}(E_2)\left(\sum_nn(u_2^n-u_1^n)+u_2^{-1}q^N-u_1^{-1}q^N\right)\quad \mod q^N.
\end{split}
\]
Thus
\[
\begin{split}
u_1+2(\omega_k+1)u_1^2-&\omega_k^{-1}u_1^{-1}q^N\equiv\\
& u_1-u_2+2u_1^2-u_2^{-1}q^N+u_1^{-1}q^N-2u_2^2+\cdots\quad\mod q^N.
\end{split}
\]
Therefore
\[2\omega_k u_1^2-(\omega_k^{-1}+1)u_1^{-1}q^N+h_1(u_1)\equiv -u_2-u_2^{-1}q^N-2u_2^2+h_2(u_2)\quad\mod q^N,\]
where $h_i(u_i)$ is a polynomial of $u_i$ with terms $u_i^n,n>2$. 
Since $\br{2c}>\br c$,we see $\br{2c}\leq N-\br{2c}<N-\br c$. Therefore we have $2\br c<N-\br c$ and $2\br c=\br{2c}=N-\br{2c}$ or $2\br c=\br{2c}<N-\br{2c}$. By comparing the coefficients of first terms, we obtain $2\omega_k\zeta^{2\mu(c)d}=-(\zeta^{\mu(2c)2d}+\zeta^{-\mu(2c)2d})$ in the case $\br{2c}=N-\br{2c}$ and  $2\omega_k\zeta^{2\mu(c)d}=-\zeta^{\mu(2c)2d}$ in the case $\br{2c}<N-\br{2c}$. In the former case, $N$ is even and $\br{2c}=N/2$. So we have $\mu(2c)2c\modx 0{N/2}$ and $\mu(2c)2d\modx 0{N/2}$. Therefore from $(c,d)=1$ we obtain $2\modx 0{N/2}$.  This is impossible. In the latter case, clearly we have a contradiction. Suppose $\br{2c}<\br c$. Then $\br{kc}=\br{2c}$. If $\br{2c}=0$, then $k,N$ are even and $\br c=N/2$. From Proposition \ref{prop1}, we get
\[
\begin{split}
(\phi_k-\phi_1)[A]_2&=\frac{\zeta^{kd}}{(1-\zeta^{kd})^2}-(\zeta^d+\zeta^{-d})q^{N/2}\quad \mod q^N,\\
(\phi_2-\phi_1)[A]_2&=\frac{\zeta^{2d}}{(1-\zeta^{2d})^2}-(\zeta^d+\zeta^{-d})q^{N/2}\quad \mod q^N.
\end{split} 
\]
By using \eqref{eq3},
\[
\begin{split}
\theta_{2,1}(E_2)\frac{\zeta^{kd}}{(1-\zeta^{kd})^2}=\theta_{k,1}(E_2)\frac{\zeta^{2d}}{(1-\zeta^{2d})^2},\\
\theta_{2,1}(E_2)(\zeta^d+\zeta^{-d})=\theta_{k,1}(E_2)(\zeta^d+\zeta^{-d}).
\end{split}
\]
If $\zeta^d+\zeta^{-d}=0$, then $2d \modx 0{N/2}$. Since $2c\modx 0{N/2}$ and $(c,d)=1$, we see $2\modx 0{N/2}$. This is impossible. Therefore $\theta_{2,1}(E_2)=\theta_{k,1}(E_2)$ and $\frac{\zeta^{kd}}{(1-\zeta^{kd})^2}=\frac{\zeta^{2d}}{(1-\zeta^{2d})^2}$. This implies that $(1-\zeta^{(k+2)d})(1-\zeta^{(k-2)d})=0$. Lemma \ref{lem1} gives a contradiction. Hence $\br{2c},\br c\ne 0,N/2$. Let $u_k=\omega u_2$, where $\omega=\omega_k/\omega_2$. By \eqref{eq3},
\[
\begin{split}
\theta_{2,1}(E_2)(\sum_n &n(\omega^nu_2^n-u_1^n)+\omega^{-1}u_2^{-1}q^N-u_1^{-1}q^N)\equiv \\
&\theta_{k,1}(E_2)(\sum_n n(u_2^n-u_1^n)+u_2^{-1}q^N-u_1^{-1}q^N)\quad \mod q^N.\end{split}
\]
Therefore $\theta_{2,1}(E_2)\omega=\theta_{k,1}(E_2)$ and
\[
\begin{split}
\sum_nn(\omega^n-\omega)u_2^n&+(\omega^{-1}-\omega)u_2^{-1}q^N\equiv \\
&\sum_nn(1-\omega)u_1^n+(1-\omega)u_1^{-1}q^N\quad \mod q^N.
\end{split}
\]
Since by Lemma \ref{lem1},$\omega\ne 1$, we have
\[
2\omega u_2^2-(1+\omega^{-1})u_2^{-1}q^N+h_2(u_2)\equiv -u_1-u_1^{-1}q^N-2u_1^2+h_1(u_1)~~\mod q^N,
\]
where $h_i(u_i)$ is a polynomial of $u_i$ with terms $u_i^n,n>2$. Since 
$\br c<N-\br c<N-\br{2c}$,we have $2\br{2c}=\br c$ and $2\omega\zeta^{2\mu(2c)2d}=-\zeta^{\mu(c)c}$. This gives a contradiction. Hence we have $c\modx 0N$.
Let $c\modx 0N$. Then by the definition of $\phi_s$, we have $\Lambda_k\circ A=\frac{\phi_{\br{kd}}-\phi_{\br{d}}}{\phi_{\br{2d}}-\phi_{\br{d}}}$. From now on, to save labor, we put $r=\br{2d},s=\br{kd},t=\br d$. Then since $r,s,t$ are distinct from each other and $\min(s,t)=\min(r,t),~(d,N)=1$, we have $r,s,t\ne 0,N/2$ and $t<r,s$. We have only to prove $t=1$. Let us assume $t>1$.
Let $T=\begin{pmatrix}1&0\\1&1\end{pmatrix}$. Then 
\begin{equation}\label{eq5}
\Lambda_k\circ T=\left(\frac{\phi_s-\phi_t}{\phi_r-\phi_t}\right)\circ T.
\end{equation}
If $\ell$ is an integer such that $0<\ell<N/2$, then $\mu(\ell)=1,\br \ell=\ell$. Let $u=\zeta q$. Then
, 
\begin{equation}\label{eq6}
\phi_\ell[T]_2\equiv \sum_n nu^{\ell n}+u^{N-\ell} \mod q^N.
\end{equation}
From \eqref{eq5},
\[(\phi_r\phi_1+\phi_s\phi_2+\phi_t\phi_k)[T]_2=(\phi_t\phi_2+\phi_s\phi_1+\phi_r\phi_k)[T]_2.
\]
By comparing the order of $q$-series in the both sides, we see
$r=t+1<s$. Since $t\geq 2$ and $t+2\leq s<N/2$, we know that $2t\geq t+2,N>2t+4$. 
By \eqref{eq6} and by the inequality relations that $r=t+1,s\geq t+2,2t\geq t+2,N>2t+4$, we have modulo $u^{t+4}$,
\[ 
\begin{split}
&\phi_r\phi_1[T]_2\modx{u^{t+2}+2u^{t+3}}{u^{t+4}},~\phi_s\phi_2[T]_2\modx{0}{u^{t+4}},\\
&~\phi_t\phi_k[T]_2\modx{u^{t+k}}{u^{t+4}},\phi_t\phi_2[T]_2\modx{u^{t+2}}{u^{t+4}},\\
&\phi_s\phi_1[T]_2\modx{u^{s+1}}{u^{t+4}},~\phi_r\phi_k[T]_2\modx{0}{u^{t+4}}.
\end{split}
\]
Therefore we obtain a congruence:
\[2u^{t+3}+u^{t+k}\modx{u^{s+1}}{u^{t+4}}.\]
The coefficients of $u^{t+3}$ on both sides are distinct from each other, we have a contradiction. Hence $t=1$.
\end{proof}
We obtain the following theorem from the Gee-Stevenhagen theory in \cite{GA} and \cite{GAS}. See also Chapter 6 of \cite{SG}.
\begin{thm}
Let $N$ and $k$ be as above. Let $\alpha\in\H$ such that $\mathbf Z[\alpha]$ is the maximal order of an imaginary quadratic field $K$. Then the ray class field of $K$ is generated by $\Lambda_l(\alpha)$ over $\mathbf Q(\zeta,j(\alpha))$.
\end{thm}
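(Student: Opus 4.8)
The plan is to derive the statement from Theorem~\ref{th1} by means of Shimura's reciprocity law in the explicit form developed by Gee and Stevenhagen (\cite{GA}, \cite{GAS}; see also Chapter~6 of \cite{SG}). Since $\mathbf Z[\alpha]=\mathcal O_K$, the set $\{1,\alpha\}$ is a $\mathbf Z$-basis of $\mathcal O_K$; writing $t=\rit{Tr}_{K/\mathbf Q}(\alpha)\in\mathbf Z$ and $\nu=\rit{N}_{K/\mathbf Q}(\alpha)\in\mathbf Z$, reduction modulo $N$ embeds $(\mathcal O_K/N)^\times$ into $\rit{GL}_2(\mathbf Z/N)$ via
\[
g_\alpha(p+q\alpha)=\begin{pmatrix} p+qt & -q\nu \\ q & p \end{pmatrix},\qquad \det g_\alpha(u)=\rit{N}_{K/\mathbf Q}(u).
\]
Let $K_N$ denote the ray class field of $K$ modulo $N$ and $H=K(j(\alpha))$ the Hilbert class field, so that $\rit{Gal}(K_N/H)$ is isomorphic to $(\mathcal O_K/N)^\times$ modulo the image of $\mathcal O_K^\times$, the class of $u$ acting as an automorphism $\sigma_u$. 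Recall that $A(N)_{\mathbf Q(\zeta)}$ is Galois over $\mathbf Q(j)$ with group $\rit{GL}_2(\mathbf Z/N)/\{\pm E_2\}$, the subgroup $\rit{SL}_2(\mathbf Z/N)/\{\pm E_2\}$ fixing $\zeta$ and an element of determinant $d$ sending $\zeta\mapsto\zeta^{d}$; Shimura reciprocity then gives $f(\alpha)^{\sigma_u}=f^{\,g_\alpha(u)^{-1}}(\alpha)$ for every $f\in A(N)_{\mathbf Q(\zeta)}$ finite at $\alpha$.

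Put $L=\mathbf Q(\zeta,j(\alpha),\Lambda_k(\alpha))$. By the results of \S3 and the remark following them we have $L\subseteq K_N$, so it suffices to show that no nontrivial element of $\rit{Gal}(K_N/K)$ fixes $L$. Such an element fixes $j(\alpha)$, hence lies in $\rit{Gal}(K_N/H)$ and equals $\sigma_u$ for some $u\in(\mathcal O_K/N)^\times$. Because $\sigma_u$ fixes $\zeta$, the action on constants forces $\det g_\alpha(u)=\rit{N}_{K/\mathbf Q}(u)\equiv1\pmod N$, so $g_\alpha(u)\in\rit{SL}_2(\mathbf Z/N)$. By Theorem~\ref{th1} the field of values $\{f(\alpha):f\in A_1(N)_{\mathbf Q(\zeta)}\ \text{finite at }\alpha\}$ equals $\mathbf Q(\zeta)(\Lambda_k(\alpha),j(\alpha))=L$; hence $\sigma_u$ fixes $f(\alpha)$ for every $f\in A_1(N)_{\mathbf Q(\zeta)}$.

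The goal is then to conclude that $g_\alpha(u)$ lies in $U\{\pm E_2\}$, where $U=\Gamma_1(N)/\Gamma(N)$ is the group of unipotent upper-triangular matrices in $\rit{SL}_2(\mathbf Z/N)$---the stabilizer of $A_1(N)_{\mathbf Q(\zeta)}$ exhibited in the proof of Theorem~\ref{th1}. Granting this, reading the entries of the matrix $g_\alpha(u)$ displayed above forces $q\equiv0\pmod N$ from the lower-left entry, whence $g_\alpha(u)=\pm E_2$ and $p\equiv\pm1\pmod N$; therefore $u\equiv\pm1\pmod{N\mathcal O_K}$ and $\sigma_u=1$, which is the desired conclusion.

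The delicate point is precisely the implication just invoked: that $\sigma_u$ fixing every value $f(\alpha)$ with $f\in A_1(N)_{\mathbf Q(\zeta)}$ forces $g_\alpha(u)$ into the stabilizer $U\{\pm E_2\}$. This is not automatic one function at a time, since $f^{g_\alpha(u)^{-1}}$ and $f$ may agree at the single point $\alpha$ without being equal as functions---exactly what happens when $u$ comes from the image of $\mathcal O_K^\times$, which is why the fields $K=\mathbf Q(i)$ and $K=\mathbf Q(\sqrt{-3})$ require separate attention. The Gee--Stevenhagen theory supplies what is missing: the image $g_\alpha((\mathcal O_K/N)^\times)$ is a Cartan subgroup of $\rit{GL}_2(\mathbf Z/N)$ that meets $\{\pm E_2\}$ exactly in the image of $\mathcal O_K^\times$ and acts faithfully on $K_N=A(N)_{\mathbf Q(\zeta)}(\alpha)$ through $(\mathcal O_K/N)^\times$ modulo $\mathcal O_K^\times$. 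A comparison of degrees then shows that the fixed field of $\sigma_u$ inside $A_1(N)_{\mathbf Q(\zeta)}(\alpha)$ is governed exactly by the coset of $g_\alpha(u)$ in $\rit{SL}_2(\mathbf Z/N)/U\{\pm E_2\}$, with no spurious coincidences at $\alpha$. Establishing this reciprocity dictionary for the subgroup $\Gamma_1(N)$ together with the constant field $\mathbf Q(\zeta)$, and treating the two exceptional fields, is where I expect the main effort to go; the remainder is the short matrix computation above.
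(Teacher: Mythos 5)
Your reconstruction uses the same ingredients the paper's one-line proof cites (Gee's Theorems 1 and 2 plus Theorem~\ref{th1}), and your setup --- the embedding $g_\alpha$, the reciprocity formula $f(\alpha)^{\sigma_u}=f^{g_\alpha(u)^{-1}}(\alpha)$, the determinant argument forcing $g_\alpha(u)\in\rit{SL}_2(\mathbf Z/N)$ --- is correct. But the pivotal step you isolate, namely that $\sigma_u$ fixing all values $f(\alpha)$ with $f\in A_1(N)_{\mathbf Q(\zeta)}$ forces $g_\alpha(u)\in U\{\pm E_2\}$, is not a technical point that the Gee--Stevenhagen theory ``supplies'': it is \emph{false} for the two exceptional fields and is \emph{equivalent to the theorem} for all other fields. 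Concretely, take $K=\mathbf Q(i)$, $\alpha=i$, and $u$ the class of $i$ in $(\mathcal O_K/N)^\times$. Then $\rit{N}_{K/\mathbf Q}(u)=1$, and $\sigma_u$ is the identity on the ray class field (global units have trivial Artin symbol), so it fixes every value of every function in $A_1(N)_{\mathbf Q(\zeta)}$; yet $g_\alpha(i)=\begin{pmatrix}0&-1\\1&0\end{pmatrix}\notin U\{\pm E_2\}$, and your concluding chain would give $u\equiv\pm1\pmod{N\mathcal O_K}$, which is false. (Same for $K=\mathbf Q(\sqrt{-3})$ with $u$ the class of a cube root of unity.) For fields with $\mathcal O_K^\times=\{\pm1\}$ the implication does hold, but only because it is equivalent to the statement being proved: both assert that the kernel of $u\mapsto\sigma_u|_L$ is exactly the image of $\mathcal O_K^\times$. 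The ``comparison of degrees'' you invoke cannot close this: faithfulness of the action of $(\mathcal O_K/N)^\times/\mathcal O_K^\times$ on $K_N$ gives no lower bound on the number of \emph{distinct values} $\Lambda_k^{g_\alpha(u)^{-1}}(\alpha)$, and ruling out coincidences of values of this one function at the one point $\alpha$ is precisely the content of the theorem, so it cannot also serve as the lemma.

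The deduction that actually follows from the cited results (and is presumably what the paper intends) runs in the opposite direction and needs no stabilizer computation at all. The Fricke functions $h_s=(g_2g_3/\Delta)\,\wp(s/N;L_\tau)$ satisfy $h_s\circ A=h_{s'}$ where $(0,s/N)A\equiv(0,s'/N)\bmod \mathbf Z^2$; since $c\equiv 0$, $d\equiv 1\bmod N$ for $A\in\Gamma_1(N)$, they are $\Gamma_1(N)$-invariant, and their Fourier coefficients lie in $\mathbf Q(\zeta)$, so $h_s\in A_1(N)_{\mathbf Q(\zeta)}$. The CM theory encapsulated in Gee's Theorems 1 and 2 (Shimura Prop.\ 6.34, Lang Ch.\ 10) says the values $h_s(\alpha)$ generate $K_N$ over $K(\zeta,j(\alpha))$ --- indeed a single primitive one suffices, because the Weber function separates $N$-torsion points of the CM curve exactly up to $\rit{Aut}(E)=\mathcal O_K^\times$, which is where the exceptional units are harmlessly absorbed. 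By Theorem~\ref{th1}, each $h_s$ is a rational function of $\Lambda_k$ and $j$ over $\mathbf Q(\zeta)$, hence $h_s(\alpha)\in\mathbf Q(\zeta,j(\alpha),\Lambda_k(\alpha))$ (one should note the routine check that a representation with denominator nonvanishing at $\alpha$ can be chosen). This gives $K_N\subseteq K(\zeta,j(\alpha),\Lambda_k(\alpha))\subseteq K_N$ at once. Your easy inclusion $L\subseteq K_N$ and the determinant observation remain valid; note finally that your reduction only yields $LK=K_N$, so generation over $\mathbf Q(\zeta,j(\alpha))$ as literally stated still requires $K\subseteq L$ --- a point the paper's own formulation glosses over as well.
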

\begin{proof}
The assertion is deduced from Theorems 1 and 2 of \cite{GA} and  Theorem \ref{th1}.
\end{proof}

\vspace{5mm}

{\small
\begin{tabular}{ll}
Faculty of Liberal Arts and Sciences \\
Osaka Prefecture University  \\
1-1 Gakuen-cho, Naka-ku Sakai\\
 Osaka, 599-8531 Japan\\
e-mail:\quad ishii@las.osakafu-u.ac.jp 
\end{tabular}
}
\end{document}